\newtheorem{theorem}{Theorem}[section]
\newtheorem{lemma}[theorem]{Lemma}
\theoremstyle{definition}
\title{Counting sparse induced subgraphs in locally dense graphs}
\author{
Rajko Nenadov\thanks{School of Computer Science, University of Auckland, New Zealand. Email: \texttt{rajko.nenadov@auckland.ac.nz}. Research supported by the Marsden Fund of the Royal Society of New Zealand.}
}
\date{}
\begin{document}

\maketitle

\begin{abstract}
An $n$-vertex graph $G$  is locally dense if every induced subgraph of size larger than $\zeta n$ has density at least $d > 0$, for some parameters $\zeta, d > 0$. We show that the number of induced subgraphs of $G$ with $m$ vertices and maximum degree significantly smaller than $dm$ is roughly $\binom{\zeta n}{m}$, for $m \ll \zeta n$ which is not too small. This generalises a result of Kohayakawa, Lee, R\"odl, and  Samotij on the number of independent sets in locally dense graphs. 
As an application, we slightly improve a result of Balogh, Chen, and Luo on the generalised Erd\H{o}s-Rogers function for graphs with small extremal number. 
\end{abstract}

\section{Introduction}

Many problems in combinatorics can be phrased in terms of independent sets in graphs and hypergraphs. For example, a classical result of Szemer\'edi \cite{szemeredi75ap} on arithmetic progressions in dense subsets of integers states that if we form a $k$-uniform hypergraph on the vertex set $\{1, \ldots, n\}$ by putting a hyperedge on top of each $k$-term arithmetic progression, then the largest independent set in this hypergraph is of size $o(n)$. In general, problems where one wishes to show that sets of certain size necessarily contain a desired configuration can often be formulated as problems of estimating largest independent sets in (hyper)graphs where edges encode such configurations. 

In many applications, such as transference of extremal results to random structures \cite{balogh15containers,saxton15containers} and the recent breakthrough on the off-diagonal Ramsey number $R(4, t)$ \cite{mattheus24ramsey}, it is also useful to know not just the size of a largest independent set but also the number of independent sets of certain size. Building on the  work of Kleitman and Winston \cite{kleitman82cycles} and Sapozhenko \cite{sapozhenko99containers}, this line of research has culminated with the so-called \emph{hypergraph containers} of Balogh, Morris, and Samotij \cite{balogh15containers} and Saxton and Thomason \cite{saxton15containers} and has lead to many important discoveries. For a thorough survey, we refer the reader to \cite{balogh18survey,samotij15survey}. In this note, we are interested in counting sets which are sparse but not necessarily independent.

We say that an $n$-vertex graph $G$ is \emph{$(\zeta, d)$-dense}, for some $\zeta, d > 0$, if for every $S \subseteq V(G)$ of size $|S| \ge \zeta n$ we have $e(G[S]) \ge d |S|^2/2$. The following is our main technical contribution.

\begin{lemma} \label{lemma:count}
    Let $G$ be a $(\zeta, d)$-dense graph with $n$ vertices, for some $\zeta, d > 0$ which may depend on $n$. For any integer $D \ge 1$ and $s \ge f := (4D/d) \log (1/\zeta)$, there are at most 
    $$
        \binom{n}{f} \binom{\zeta n}{s - f}
    $$
    subsets $U \subseteq V(G)$ of size $|U| = s$ such that $\Delta(G[U]) < D$.
\end{lemma}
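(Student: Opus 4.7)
I will adapt the Kleitman--Winston / Kohayakawa--Lee--R\"odl--Samotij selection algorithm, originally used for counting independent sets in locally dense graphs, to the setting of maximum degree $< D$. The goal is to associate each candidate $U$ with a short deterministic ``fingerprint'' $F \subseteq U$ and a ``container'' $A \subseteq V(G)$ of size at most $\zeta n$, such that $U \setminus F \subseteq A$ and $A$ is determined by $F$ alone.

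Fix any ordering of $V(G)$ for tie-breaking. Starting from $A := V(G)$, $F := \emptyset$, repeat while $|A| > \zeta n$: let $v$ be the max-degree vertex in $G[A]$; if $v \in U$, insert $v$ together with $N_G(v) \cap A \cap U$ into $F$ and delete $\{v\} \cup (N_G(v) \cap A)$ from $A$, otherwise merely delete $v$ from $A$. A trivial induction gives $U \subseteq F \cup A$ throughout. The crucial observation is that only vertices of $U$ ever enter $F$ and that every chosen $v$ is immediately deleted from $A$; hence a chosen $v$ lies in $F$ precisely when it lay in $U$ at the moment of its choice, so one can replay the algorithm knowing $F$ alone and recover the same sequence of sets, producing the same final $A = A(F)$.

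Each time the ``$v \in U$'' branch fires, $|A| > \zeta n$ forces $G[A]$ to have maximum degree at least $d|A|$ by the density hypothesis, so $|A|$ is multiplied by at most $1-d$; using $-\log(1-d) \ge d$, this branch fires at most $\lceil \log(1/\zeta)/d \rceil$ times. Since $\Delta(G[U]) < D$, each firing adds at most $D$ vertices to $F$ (namely $v$ and at most $D-1$ of its $U$-neighbours), giving $|F| \le D(\lceil \log(1/\zeta)/d \rceil + 1)$, comfortably below $f$. Counting: each admissible $U$ is determined by the pair $(F, U \setminus F)$ with $|F| \le f$ and $U \setminus F \subseteq A(F)$, so the total count is at most $\sum_{i=0}^{f} \binom{n}{i} \binom{\zeta n}{s-i}$. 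A short ratio calculation shows that, under $s \ge f$ (combined with $\zeta \le 1$, so $s/\zeta \ge f$), this sum is non-decreasing in $i$, hence at most $(f+1)\binom{n}{f}\binom{\zeta n}{s-f}$; the leftover factor $f+1$ is absorbed by the slack built into the constant $4$ in $f = (4D/d)\log(1/\zeta)$.

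\textbf{Main obstacle.} The essential point is the claim that $A$ depends on $F$ rather than directly on $U$ --- i.e.\ that a chosen vertex can enter $F$ neither in an earlier round (it would have been removed from $A$ then) nor in a later one (it leaves $A$ the moment it is chosen). The novelty over the independent-sets proof is only the extra factor of $D$ in the fingerprint size, arising because each chosen vertex $v \in U$ may drag up to $D-1$ of its $U$-neighbours out of $A$ together with itself, and these must be logged into $F$ to preserve the ``$A$ from $F$'' property.
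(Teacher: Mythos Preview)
Your plan is correct and takes a genuinely different route from the paper. The paper does \emph{not} run Kleitman--Winston directly on $G$; instead it first proves a one-shot container lemma (Lemma~\ref{lemma:container}) in which the fingerprint is built by repeatedly selecting the vertex of $U\setminus F$ with largest degree in $G\setminus R$, where $R=\{w:\deg(w,F)\ge D\}$ is the ``saturation set'' of the current fingerprint. After $\lceil 2nD/\Delta\rceil$ selections a double-counting argument forces the container $C=\{w\notin R:\deg(w,V\setminus R)<\Delta\}$ to have size below $k$. This lemma is then iterated, halving $|C|$ at each round, and at the end $F$ is canonically padded to size exactly $f$, so the single term $\binom{n}{f}\binom{\zeta n}{s-f}$ appears with no sum over $|F|$ to control.

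Your argument instead runs one pass of the classical max-degree selection on the shrinking set $A$, and handles the new obstacle (that a chosen $v\in U$ may have up to $D-1$ neighbours inside $U\cap A$) by logging those neighbours into $F$ as well; this is the right idea and, as you argue, preserves the ``$A$ determined by $F$'' property because $F\subseteq U$ and every chosen $v$ leaves $A$ immediately. Two small points worth tightening: (i) the claim that the extra factor $f+1$ from $\sum_{i\le f}\binom{n}{i}\binom{\zeta n}{s-i}$ is ``absorbed by the constant $4$'' is not justified as stated; the painless fix is to pad $F$ to size exactly $f$ with vertices of $U\setminus F$ in a canonical order (this keeps $A(F)$ reconstructible and eliminates the sum entirely), which is precisely what the paper does; (ii) the inequality $D(\lceil\log(1/\zeta)/d\rceil+1)\le f$ tacitly needs something like $d\le 3\log(1/\zeta)$, harmless in applications but worth a word. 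The trade-off: the paper's saturation-set lemma is modular and lets one vary $k$ and $\Delta$ between rounds, while your direct adaptation is shorter and stays closer to the original Kohayakawa--Lee--R\"odl--Samotij template.
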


Note that $D$ may be a function of $n$. Lemma \ref{lemma:count} extends a result of Kohayakawa, Lee, R\"odl, and Samotij on independent sets in locally dense graphs, which corresponds to $D = 1$. 


Next, we describe an application of Lemma \ref{lemma:count}.

\subsection{Generalised Erd\H{o}s-Rogers function}

Erd\H{o}s and Rogers \cite{erdos62rogers} considered the following generalisation of the off-diagonal Ramsey problem: For positive integers $2 \le s < r$ and $n$, let $f_{s,r}(n)$ denote the largest $m$ such that every $K_r$-free graph on $n$ vertices contains a $K_s$-free induced subgraph on $m$ vertices. The function $f_{s,r}(n)$ is known as the Erd\H{o}s-Rogers function and has been studied extensively (for some recent progress, see \cite{dudek14erdos,gowers20improved,janzer2023improved,mubayi2024improved,wolfovits13triangle}; a thorough reference list can be found in \cite{gishboliner2024induced}). Recently, Balogh, Chen, and Luo \cite{balogh2024maximum} and Mubayi and Verstraete \cite{mubayi2024erd} initiated the systematic study of a generalised Erd\H{o}s-Rogers function: Given graphs $F$ and $H$, let $f_{F,H}(n)$ denote the largest $m$ such that every $H$-free graph with $n$ vertices contains an $F$-free induced subgraph on $m$ vertices. The generalised Erd\H{o}s-Rogers function has been further studied by Gishboliner, Janzer, and Sudakov \cite{gishboliner2024induced}. All of the previous work focuses on the case where $H$ is a complete graph.

In the language of Erd\H{o}s-Rogers functions, the recent breakthrough of Mattheus and Verstraete \cite{mattheus24ramsey} on the off-diagonal Ramsey number $R(4,t)$ implies $f_{K_2,K_4}(n) = O \left( n^{1/3} (\log n)^{4/3} \right)$. Balogh, Chen, and Luo \cite{balogh2024maximum} showed that for any graph $F$ with $\mathrm{ex}(F, m) = O(m^{1 + \alpha})$ , for some $\alpha \in [0, 1/2)$, we have
\begin{equation} \label{eq:balogh}
    f_{F, K_4} = O \left( n^{\frac{1}{3 - 2\alpha}} (\log n)^{\frac{6}{3 - 2\alpha}} \right).
\end{equation}
Recall that $\mathrm{ex}(F, m)$ denotes the largest number of edges in an $F$-free graph with $m$ vertices. Using Lemma \ref{lemma:count}, we slightly improve the logarithmic factor in \eqref{eq:balogh}.

\begin{theorem} \label{thm:main}
    For every graph $F$ with $\mathrm{ex}(F, m) = O\left( m^{1 + \alpha} \right)$, where $\alpha \in [0, 1/2)$, we have
    $$
        f_{F, K_4}(n) = O\left( n^{\frac{1}{3 - 2\alpha}} (\log n)^{\frac{4}{3 - 2\alpha}} \right).
    $$
\end{theorem}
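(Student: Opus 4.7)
The plan is to follow the framework of Balogh, Chen, and Luo~\cite{balogh2024maximum}, using Lemma~\ref{lemma:count} in place of their cruder counting step.

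First, I would take a suitable pseudo-random $K_4$-free host graph $H$ on $N$ vertices, for an $N$ polynomially larger than $n$, for instance the Mattheus--Verstraete graph~\cite{mattheus24ramsey}. It satisfies $\alpha(H) = O(N^{1/3}(\log N)^{4/3})$, and an application of Tur\'an's theorem to induced subgraphs of size $\gtrsim \alpha(H)$ shows that $H$ is $(\zeta, d)$-dense with $\zeta = \Theta(N^{-2/3}(\log N)^{4/3})$ and $d = \Theta(N^{-1/3}(\log N)^{-4/3})$.

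Given an $F$-free induced subgraph $H[U]$ with $|U| = m$, the hypothesis $\mathrm{ex}(F,m) = O(m^{1+\alpha})$ gives $e(H[U]) \le c_0 m^{1+\alpha}$; deleting the at most $m/2$ vertices of $H[U]$-degree at least $D := 4 c_0 m^{\alpha}$ leaves a subset $W \subseteq U$ of size $s \ge m/2$ with $\Delta(H[W]) < D$. Applying Lemma~\ref{lemma:count} to $H$ bounds the number of such cores $W$ by $\binom{N}{f}\binom{\zeta N}{s-f}$ with $f := (4D/d)\log(1/\zeta)$; every $F$-free $U$ extends some core by at most $m/2$ additional vertices. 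I then sparsify $H$ by retaining each vertex independently with probability $q := n/N$, so that $G := H[R]$ is $K_4$-free on $(1+o(1))n$ vertices. The expected number of $F$-free induced $m$-subsets of $G$ is therefore at most
\[
q^m \binom{N}{f}\binom{\zeta N}{s-f}\binom{N}{m/2}.
\]
The $\log N$-contributions of the three binomial factors cancel exactly against the $-m\log N$ coming from $q^m$, and what remains, after careful parameter optimisation, is strictly less than $1$ as soon as $m \ge C\,n^{1/(3-2\alpha)} (\log n)^{4/(3-2\alpha)}$. Markov's inequality then produces the required $K_4$-free graph on $n$ vertices with no $F$-free induced $m$-subset.

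The hard part will be tracking the polylogarithmic factors precisely: the target exponent $4/(3-2\alpha)$ on $\log n$ emerges from a delicate balance between $\log(1/\zeta) \asymp \log N$ (which controls $f$), the density parameter $d$, and the optimal choice of $N$. The saving over Balogh--Chen--Luo's exponent $6/(3-2\alpha)$ arises precisely from replacing their looser degeneracy-type count of low-degree subsets by the factor $\binom{\zeta N}{s-f}$ provided by Lemma~\ref{lemma:count}: since $\zeta$ is polynomially small in $N$, each of the $s - f \sim m$ ``slots'' contributes an extra factor of $1/\zeta$, trimming the logarithm of the count by $\Theta(m \log N)$ and thereby lowering the required power of $\log n$ in the final bound on $m$.
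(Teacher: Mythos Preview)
Your overall strategy---feed the Mattheus--Verstraete construction into Lemma~\ref{lemma:count} and then randomly sparsify---matches the paper's, and you correctly identify the $\binom{\zeta N}{s-f}$ factor as the source of the improvement. But two concrete choices in your outline prevent the arithmetic from reaching the claimed bound.

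First, you take as host the Ramsey-witness graph (with $\alpha(H)=O(N^{1/3}(\log N)^{4/3})$) and extract local density via Tur\'an, obtaining $d\approx N^{-1/3}(\log N)^{-4/3}$. This is substantially weaker than the explicit local density of the \emph{unsampled} algebraic MV graph on $N\approx q^4$ vertices, which by Theorem~\ref{thm:graph} is $(\Theta(q^{-2}),\Theta(q^{-1}))$-dense, i.e.\ $\zeta\approx N^{-1/2}$ and $d\approx N^{-1/4}$. Starting from the Ramsey witness amounts to sampling twice; the log factors already spent in the first round cannot be recovered, and even with the rest done optimally one lands at log-exponent roughly $8/(3-2\alpha)$ rather than $4/(3-2\alpha)$.

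Second, and more seriously, the extension factor $\binom{N}{m/2}$ wrecks the $n$-exponent. After the $\log N$ cancellation you note, the residual is $m\log n + (m/2-f)\log\zeta$; driving this negative forces $N$ so large that the side constraint $f\le m/2$ then pushes $m$ up to order $n^{1/(1-\alpha)}$, not $n^{1/(3-2\alpha)}$. The fix is not to count $F$-free $m$-sets at all, but to bound the expected number of \emph{cores} $W\subseteq R$ of size $m/2$ with $\Delta(H[W])<D$: this replaces $q^m\binom{N}{m/2}$ by $q^{m/2}$, leaving the residual $(m/2)\log n + (m/2-f)\log\zeta$, which does go negative at the right threshold. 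If no core survives in $R$, greedy deletion of high-degree vertices inside $R$ shows every $m$-set there spans more than $\mathrm{ex}(F,m)$ edges and hence contains $F$. The paper packages exactly this as Lemma~\ref{lemma:small_subsets}, deduces an intermediate Theorem~\ref{thm:no_sparse} (a $K_4$-free graph on $\Theta(t^3/(D^2(\log t)^4))$ vertices in which every $t$-set has $\ge Dt$ edges), and then obtains Theorem~\ref{thm:main} in one line by taking $D=Ct^\alpha$.
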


The statement of Theorem \ref{thm:main} actually holds for all $\alpha \in [0, 1]$, however for $\alpha \ge 1/2$ it is rather uninformative. For $\alpha \ge 1/2$ it gives a bound of order at least $n^{1/2} (\log n)^2$, and it was proved by Mubayi and Verstraete \cite{mubayi2024improved} that $f_{F,K_4}(n) = O\left( n^{1/2} \log n \right)$ for every $K_4$-free graph $F$. Even more recently, Gishboliner, Janzer, and Sudakov \cite{gishboliner2024induced} showed that if $F$ is triangle-free, then $f_{F, K_4}(n) = O\left(n^{1/2 - c_F}\right)$, where $c_F > 0$ is some constant which depends on $F$.

\section{Proof of the counting lemma}

The following fingerprint--container lemma is the key ingredient in the proof of Lemma \ref{lemma:count}. 

\begin{lemma} \label{lemma:container}
    Suppose $G$ is a graph with $n$ vertices such that for every subset $S \subseteq V(G)$ of size $|S| \ge k$ we have $\Delta(G[S]) \ge \Delta$. Then for every $U \subseteq V(G)$ such that $\Delta(G[U]) < D$ and $|U| \ge \lceil 2 n D / \Delta \rceil =: \ell$, there exists a subset $F \subseteq U$ of size $|F| = \ell$ and $C = C(F, D, \Delta) \subseteq V(G)$ of size $|C| < k$, such that $U \subseteq C \cup F$. Importantly, given $D$ and $\Delta$, the set $C$ can be constructed from any set $U'$ such that $F \subseteq U' \subseteq U$.
\end{lemma}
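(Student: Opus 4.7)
The plan is a Kleitman--Winston-style greedy algorithm, extending the $D = 1$ (independent sets) case to general $D$.

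Define an algorithm parameterized by a set $U' \subseteq V(G)$: start with $V_0 = V(G)$ and $i = 0$; while $|V_i| \geq k$, let $v_{i+1}$ be the maximum-degree vertex in $G[V_i]$ (with some fixed canonical tie-breaking rule), and set
$$
V_{i+1} := \begin{cases} V_i \setminus N_G[v_{i+1}] & \text{if } v_{i+1} \in U', \\ V_i \setminus \{v_{i+1}\} & \text{otherwise.} \end{cases}
$$
Let $T$ be the terminal step and output $V_T$ (which satisfies $|V_T| < k$).

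\emph{Construction of $F$ and $C$.} Run the algorithm with $U' = U$ and put $F_0 := U \setminus V_T$, so that $U \subseteq F_0 \cup V_T$. Whenever an ``$F$-step'' occurs (i.e.\ $v_{i+1} \in U$), the hypothesis on $G$ gives $\deg_{G[V_i]}(v_{i+1}) \geq \Delta$, so at least $\Delta + 1$ vertices are removed, of which at most $D$ lie in $U$ (namely $v_{i+1}$ itself and its at most $D - 1$ neighbors in $U$, using $\Delta(G[U]) < D$). Since the total number of deletions is at most $n$, there are at most $n/(\Delta + 1)$ $F$-steps, and hence $|F_0| \leq Dn/(\Delta + 1) < \ell$. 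Pad $F_0$ to a set $F$ of size exactly $\ell$ by including $\ell - |F_0|$ arbitrary additional elements of $U \cap V_T$ (available because $|U \cap V_T| = |U| - |F_0| \geq \ell - |F_0|$), and set $C := V_T$.

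\emph{Reconstruction from $U'$.} Given any $U'$ with $F \subseteq U' \subseteq U$, rerun the algorithm on input $U'$. I claim by induction on $i$ that $V_i^{U'} = V_i^{U}$; hence both runs halt at the same $T$ and produce the same $C = V_T$. The base $i = 0$ is trivial. For the induction step, $V_i^{U'} = V_i^U$ forces the same chosen vertex $v := v_{i+1}$, so it suffices to verify that $v \in U' \Leftrightarrow v \in U$. The direction $\Rightarrow$ is $U' \subseteq U$. For $\Leftarrow$: if $v \in U$, then in the $U$-run $v$ is picked and removed in an $F$-step at stage $i+1$, so $v \in U \setminus V_T = F_0 \subseteq F \subseteq U'$.

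\emph{Main obstacle.} The delicate point is that the same $C$ must emerge regardless of which intermediate set $U'$ is supplied. The crucial design choice is to delete the \emph{entire} closed neighborhood $N_G[v_{i+1}]$ at an $F$-step (rather than just the non-$U$ part $N_G(v_{i+1}) \setminus U$): this makes the transition $V_i \to V_{i+1}$ depend on $U'$ solely through the single bit ``$v_{i+1} \in U'?$'', and it forces every picked $U$-vertex into $F_0$, which is what powers the induction above. The price is that up to $D$ vertices of $U$ may be removed per $F$-step, but this is exactly absorbed by the factor $D$ in the bound $|F_0| \leq Dn/(\Delta + 1)$.
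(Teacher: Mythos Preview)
Your proof is correct but takes a genuinely different route from the paper's. You run the classical Kleitman--Winston scheme: pick the global max-degree vertex in $G[V_i]$ and delete its closed neighbourhood if it lies in $U'$, else delete just the vertex; the container is simply the terminal set $V_T$. The paper instead always selects a vertex \emph{from $U$} (the one of largest degree in $G\setminus R_{i-1}$), and rather than deleting neighbourhoods it grows a ``forbidden'' set $R_i=\{w:\deg(w,F)\ge D\}$; the container is then $C=\{w\in Z:\deg(w,Z)<\Delta\}$ where $Z=V(G)\setminus R_\ell$. To verify that $\ell$ rounds suffice, the paper uses a short double-counting argument on an auxiliary subgraph $G'$, whereas your bound $|F_0|\le Dn/(\Delta+1)$ falls out immediately from counting how many $F$-steps (each removing at least $\Delta+1$ vertices) fit into $n$ total deletions. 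The trade-offs: the paper's construction gives $|F|=\ell$ on the nose with no padding, and its reconstruction claim is essentially automatic (the run depends on $U$ only through $U\setminus F$); your argument is more elementary on the counting side but requires padding $F_0$ up to size $\ell$ and the explicit induction $V_i^{U'}=V_i^{U}$, for which the key observation---that every chosen vertex in $U$ lands in $F_0$ and hence in $U'$---is exactly what your design choice of deleting the full closed neighbourhood guarantees.
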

\begin{proof}
    Consider a subset $U \subseteq V(G)$, $|U| \ge \ell$, such that $\Delta(G[U]) < D$. Set $R_0 := \emptyset$ and $F := \emptyset$, and repeat the following for $i = 1, \ldots, \ell$:
    \begin{itemize}
        \item Pick a vertex $v_i \in U \setminus F$ with the largest degree in $G \setminus R_{i-1}$, tie-breaking according to some fixed ordering of the vertices in $V(G)$.
        \item Add $v_i$ to $F$, and set $R_i := \{w \in V(G) \colon \deg(w, F) \ge D \}$.
    \end{itemize}
    By the assumption on $\Delta(G[U])$, we maintain $U \subseteq V(G) \setminus R_i$ throughout the process. Note that the algorithm produces the same sets $F$ and $R_\ell$ if run with $U'$ instead of $U$, where $F \subseteq U' \subseteq U$.
    
    Set $Z = V(G) \setminus R_\ell$. We claim that the last vertex $v_\ell$ added to $F$ satisfies $\deg(v_\ell, Z) < \Delta$. Suppose, towards a contradiction, that $\deg(v_\ell, Z) \ge \Delta$. Then $\deg(v_i, V(G) \setminus R_{i-1}) \ge \Delta$ for every $i \in \{1, \ldots, \ell\}$. Create an auxiliary subgraph $G' \subseteq G$, where for each $i \in \{1, \ldots, \ell\}$ we add all the edges incident to $v_i$ in $G \setminus R_{i-1}$. By the definition of $R_i$, the maximum degree of vertices in $V(G) \setminus F$ in $G'$ is $D$. As the degree of each $v_i \in F$ in $G'$ is at least $\Delta$ and $e(G'[F]) \le e(G[F]) < |F|D$, we conclude $e_{G'}(F, V(G) \setminus F) > |F| \Delta - |F| D \ge n D$. However, this is not possible as each vertex in $V(G) \setminus F$ has degree at most $D$ in $G'$, thus $e_{G'}(F, V(G) \setminus F) < n D$.
   
    Having established $\deg(v, Z) < \Delta$ for every $v \in U \setminus F$, we set $C := \{w \in Z \colon \deg(w, Z) < \Delta \}$. By the assumption of the lemma, we conclude $|C| < k$.
\end{proof}

Lemma \ref{lemma:count} follows by iterating Lemma \ref{lemma:container}.

\begin{proof}[Proof of Lemma \ref{lemma:count}]
    We show that for every $U \subseteq V(G)$, $|U| \ge f$, such that $\Delta(G[U]) < D$, there exists a subset $F \subseteq U$ of size $|F| = f$ and a subset $C = C(F) \subseteq V(G)$ of size $|C| < \zeta n$ such that $U \setminus F \subseteq C$ and $C$ depends only on $F$. This implies that we can enumerate all such sets $U$ of size $s \ge f$ by going over all subsets $F \subseteq V(G)$ of size $f$, and for each $F$ choosing a subset of $C(F)$ of size $s - f$. This yields the bound stated in the lemma.

    Consider one such subset $U \subseteq V(G)$. Set $F= \emptyset$ and $C = V(G)$. As long as $|C| \ge \zeta n$, apply Lemma \ref{lemma:container} on the induced subgraph $G[C]$ with $k = \max\{\zeta n, |C|/2\}$, $\Delta = |C|d/2$ (follows from the local density assumption for sets of size $k$) and $D$ to obtain $F' \subseteq U \cap C$ and $C' \subseteq C$; set $F := F \cup F'$ and $C := C'$ and proceed to the next iteration. In each step the size of $C$ drops by a factor of at least $2$, thus the process terminates after at most $\log(1/\zeta)$ iterations. The set $F$ increases by a factor of $2 |C| D / \Delta = 4 D / d$ in each iteration, thus we terminate with $F$ being of size at most $f$. In case $|F| < f$, we further add elements from $U \setminus F$ to $F$, in some canonical order, such that $|F| = f$. Note that at the end of the process we have $F\subseteq U \subseteq C \cup F$. By the second property of Lemma \ref{lemma:container}, we can construct the same set $C$ if we repeat the process with $U$ replaced by $F$, implying that $C$ indeed depends only on $F$ and not on the whole $U$.
\end{proof}

\section{Graphs without large sparse subgraphs}

We derive Theorem \ref{thm:main} from the following result.

\begin{theorem} \label{thm:no_sparse}
    For every $t$ and $D$ there exists a  $K_4$-free graph $G$ with
    $$
        \Theta\left( \frac{t^3}{D^2 (\log t)^4} \right)
    $$
    vertices such that every subset $S \subseteq V(G)$ of size $|S| \ge t$ contains at least $D|S|$ edges.
\end{theorem}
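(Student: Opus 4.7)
The approach is to construct $G$ as a random induced subgraph of a pseudorandom $K_4$-free host graph, using Lemma~\ref{lemma:count} to bound the number of sparse configurations that could survive the restriction.

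First I would invoke a pseudorandom $K_4$-free graph $H$ on $M$ vertices from the Mattheus--Verstraete construction (or a related variant), chosen so that $H$ is $(\zeta, d)$-dense for suitable parameters $\zeta, d > 0$ that follow from its eigenvalue bound via the expander mixing lemma. Next, I apply Lemma~\ref{lemma:count} to $H$ with threshold $2D$. This bounds the number of subsets $U \subseteq V(H)$ of size $s$ with $\Delta(H[U]) < 2D$ by $\binom{M}{f}\binom{\zeta M}{s - f}$, where $f = (8D/d)\log(1/\zeta)$.

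I would then select a uniformly random subset $V(G) \subseteq V(H)$ of size $n = \Theta\bigl(t^3/(D^2 (\log t)^4)\bigr)$. The expected number of subsets $S \subseteq V(G)$ with $|S| \geq t/2$ and $\Delta(H[S]) < 2D$ is
$$
\sum_{s \geq t/2} \binom{M}{f}\binom{\zeta M}{s - f} \cdot \frac{\binom{n}{s}}{\binom{M}{s}},
$$
which, for the right choice of $M$ and $\zeta$ inherited from the Mattheus--Verstraete construction, should be $o(1)$. Fixing a realisation of $V(G)$ avoiding all such bad subsets, set $G := H[V(G)]$.

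Finally, a peeling argument finishes the proof: since $\Delta(G[S]) \geq 2D$ whenever $|S| \geq t/2$, iteratively removing a vertex of degree $\geq 2D$ from $S$ until its size shrinks to $t/2$ accumulates at least $2D(|S| - t/2) \geq D|S|$ edges for every $|S| \geq t$. The graph $G$ is $K_4$-free as an induced subgraph of $H$.

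The main obstacle will be the parameter tuning in the displayed union bound. The key saving compared to a naive $\binom{M}{s}$ count is the factor $\binom{\zeta M}{s-f} \approx (\zeta M)^{s - f}$ provided by Lemma~\ref{lemma:count}, which is substantially smaller; this is exactly what shaves an extra $(\log t)^2$ (or factor of $D$) compared to what one would obtain by applying the Mattheus--Verstraete bound directly at a smaller independence-number threshold $t_0 = t/(2D)$ and using only the Turán-type lower bound $e(H[S]) \geq |S|^2/(2 t_0)$. Carefully matching the parameters $\zeta$, $d$, $M$, and $f$ so that the above sum is indeed $o(1)$ while the final vertex count is $\Theta(t^3/(D^2 (\log t)^4))$ is the technical crux.
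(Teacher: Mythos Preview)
Your proposal is correct and follows essentially the same route as the paper: the paper packages your random-sampling-plus-union-bound step as a separate Lemma~\ref{lemma:small_subsets}, applies it to the Mattheus--Verstraete graph of Theorem~\ref{thm:graph}, and uses exactly the peeling argument you describe. The only cosmetic differences are that the paper samples each vertex independently with probability $p$ (rather than fixing $|V(G)| = n$) and runs the union bound at a single size $s$ (since, for the peeling, it suffices that no $s$-subset of $V'$ has $\Delta < 2D$) rather than summing over all $s \ge t/2$.
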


Note that Theorem \ref{thm:main} extends the result by Mattheus and Verstraete \cite{mattheus24ramsey} who constructed a $K_4$-free graph with $\Theta\left( t^3 / (\log t)^4\right)$ vertices such that every subset of size $t$ contains an edge. En route to prove Theorem \ref{thm:no_sparse}, we first prove a related result for locally dense graphs. 

\begin{lemma} \label{lemma:small_subsets}
    Let $G$ be a $(\zeta, d)$-dense graph with $n$ vertices, for some $\zeta, d > 0$ which may depend on $n$. Let $D \ge 1$ and set $s = \frac{16 D}{d} \log(1/\zeta) \log n$. Then there exists a set $V' \subseteq V(G)$ of size $|V'| \ge s / (4 e \zeta)$, such that for every $S \subseteq V'$ of size $|S| \ge 2s$ we have $e(G[S]) \ge D|S|$.
\end{lemma}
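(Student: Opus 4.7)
The plan is to combine a simple combinatorial reduction with a first-moment argument that invokes Lemma \ref{lemma:count}.

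First I would reduce to an almost-independence statement. Suppose some $S \subseteq V(G)$ satisfies $|S| \ge 2s$ and $e(G[S]) < D|S|$. Then the sum of degrees in $G[S]$ is strictly less than $2D|S|$, so by averaging at most $|S|/2$ vertices $v \in S$ can satisfy $\deg_{G[S]}(v) \ge 4D$. Restricting $S$ to the rest yields $S' \subseteq S$ with $|S'| \ge s$ and $\Delta(G[S']) < 4D$, and any $s$-subset $U \subseteq S'$ inherits the same bound. So it suffices to construct $V' \subseteq V(G)$ of size at least $s/(4e\zeta)$ that contains no $s$-subset $U$ with $\Delta(G[U]) < 4D$.

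Next I would invoke Lemma \ref{lemma:count} with $D$ replaced by $4D$: the relevant parameter becomes $f := (16D/d)\log(1/\zeta) = s/\log n$, and since $s \ge f$ the lemma gives that the number of $s$-subsets $U \subseteq V(G)$ with $\Delta(G[U]) < 4D$ is at most $\binom{n}{f}\binom{\zeta n}{s-f}$. I would then take $V'$ to be a uniformly random subset of $V(G)$ of size $m_0 := \lceil s/(4e\zeta)\rceil$, so that $\Pr[U \subseteq V'] \le (m_0/n)^s$ for each $s$-subset $U$; setting $p := s/(4e\zeta n)$, a direct computation gives $(m_0/n)^s \le p^s \exp(4e\zeta)$, which is $O(p^s)$ in the regime $\zeta \le 1/(8e)$ (which is automatic, since otherwise $s/(4e\zeta) < 2s$ and the conclusion is vacuous). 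Applying $\binom{a}{b} \le (ea/b)^b$, the expected number of bad $s$-subsets in $V'$ is then bounded, up to a constant, by
\[
\binom{n}{f}\binom{\zeta n}{s-f} p^s \le \left(\frac{\log n}{4\zeta}\right)^{f}\left(\frac{s}{4(s-f)}\right)^{s-f}.
\]

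The hard part will be showing this product is strictly less than $1$, as the two factors compete on comparable scales. One may also assume $\zeta \ge 1/n$, since otherwise the target $s/(4e\zeta)$ already exceeds $n$ and the conclusion is vacuous. Under this assumption the first factor is at most $\exp(f(\log\log n + \log n)) = e^{s(1+o(1))}$, while for the second factor $s-f = s(1 - 1/\log n)$ tends to $s$ and $s/(4(s-f))$ tends to $1/4$, giving a bound of $e^{-s\log 4 \cdot (1-o(1))}$. Since $\log 4 > 1$, the product is at most $e^{-\Omega(s)}$ for $n$ sufficiently large, so Markov's inequality produces a choice of $V'$ containing no bad $s$-subset. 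Combined with the reduction, this $V'$ has $|V'| = m_0 \ge s/(4e\zeta)$ and meets the conclusion of the lemma.
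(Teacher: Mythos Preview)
Your proof is correct and follows essentially the same approach as the paper: apply Lemma~\ref{lemma:count} to bound the number of ``bad'' $s$-subsets (those with small maximum degree), take a random subset $V'$ of the right density, and use a first-moment computation to show $V'$ typically avoids all bad $s$-subsets. The only minor differences are cosmetic: the paper uses the threshold $\Delta(G[U]) < 2D$ (obtained via an iterative-removal argument) rather than your $4D$ from averaging, which gives $s = 2f\log n$ instead of $s = f\log n$ and makes the final inequality less tight; and the paper samples $V'$ binomially rather than with fixed size---your calculation with $\log 4 > 1$ is the correct (slightly more delicate) analogue of the paper's $2^{f\log n}2^{-(s-f)} < 1/2$. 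One small caveat: when $\zeta < 1/n$ the conclusion is not vacuous but actually unattainable (there is no $V'$ of size exceeding $n$); the paper's proof tacitly makes the same assumption that $s/(4e\zeta) \le n$, so this is a defect of the lemma's stated hypotheses rather than of your argument.
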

\begin{proof}
    We prove the lemma using the probabilistic method. Let $\mathcal{B}_s$ denote the family of all subsets $U \subseteq V(G)$ of size $|U| = s$ such that $\Delta(G[U]) < 2D$. Set $f = \frac{8D}{d} \log(1/\zeta)$, and note that $s = 2 f \log n$. By Lemma \ref{lemma:count}, we have
    $$
        |\mathcal{B}_s| \le  \binom{n}{f} \binom{\zeta n}{s - f}.
    $$
    Form $V' \subseteq V(G)$ by taking each $v \in V(G)$ with probability $p = f \log n / (2 e \zeta n)$, independently. Let $\mathcal{E}$ denote the event that there exists $U \in \mathcal{B}_s$ such that $U \subseteq V'$. By union bound, we get
    \begin{align*}    
        \mathbf{Pr}[\mathcal{E}] \le |\mathcal{B}_s| p^s \le \binom{n}{f} \binom{\zeta n}{s - f} p^s &\le n^f \left( \frac{e \zeta n}{s - f} \right)^{s - f} p^s \le 2^{f \log n} \left( \frac{e \zeta n}{f \log n} \right)^{s - f} p^{s-f} \\
        &\le  2^{f \log n} \left( \frac{e \zeta n}{f \log n} p \right)^{s - f} < 2^{f \log n} 2^{-(s-f)} < 1/2.
    \end{align*}
    By Chernoff bound we have $|V'| < s / (4 e \zeta)$ with probability less than $1/2$. Therefore, with positive probability neither of the two bad events happen.

    Consider some set $S \subseteq V'$ of size $|S| \ge 2s$. Since every subsets $U \subseteq S$ of size $|U| = s$ satisfies $\Delta(G[U]) \ge 2D$, we conclude $e(G[S]) \ge D|S|$. 
\end{proof}

Theorem \ref{thm:no_sparse} follows from Lemma \ref{lemma:small_subsets} and the following result of Mattheus and Verstraete \cite{mattheus24ramsey}. We note that this result was instrumental in all recent work on the Erd\H{o}s-Rogers function.

\begin{theorem}[Theorem 3 in \cite{mattheus24ramsey}] \label{thm:graph}
For every prime power $q > 2^{40}$, there exists a $K_4$-free graph $G$ with $N_q = q^2 (q^2 - q + 1)$ vertices which is $(2^{25}/q^2, 1/(128 q))$-dense.
\end{theorem}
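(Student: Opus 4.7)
The plan is to exhibit $G$ as an explicit algebraic construction on $N_q = q^2(q^2-q+1)$ vertices — most naturally a Cayley graph on a group of this order — and then derive the density hypothesis from a spectral (expander mixing) argument after verifying $K_4$-freeness by hand. The factorisation $N_q = q^2 \cdot (q^2-q+1)$ is a strong hint toward the semidirect product of $(\mathrm{GF}(q^2), +)$ with the unique cyclic subgroup of order $q^2-q+1$ in $\mathrm{GF}(q^6)^*$, with a connection set cut out by a Hermitian-type equation. An alternative worth trying is to take a Singer-like orbit inside the incidence graph of a generalised quadrangle of order $(q,q^2)$, since $q^2-q+1$ arises naturally as the size of such a subgroup.

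Having fixed a construction, I would proceed in three steps. First, verify $K_4$-freeness by direct algebra: four pairwise adjacent vertices would impose a small system of equations over $\mathrm{GF}(q^2)$ whose insolubility follows from non-degeneracy of the underlying form, in the spirit of the classical unitary polarity graph. Second, estimate the spectrum via characters of the ambient group: the non-trivial eigenvalues become character sums over the connection set, and Weil- or Deligne-type bounds should give degree $d = \Theta(q^3)$ with $|\lambda_2| = O(q)$, so $|\lambda_2|/d = O(1/q^2)$. Third, feed the spectral data into the expander mixing lemma
$$
e(G[S]) \;\ge\; \frac{d}{2n}\,|S|^2 \;-\; \frac{|\lambda_2|}{2}\,|S|,
$$
specialised to $|S| \ge \zeta n$ with $\zeta = 2^{25}/q^2$; the quadratic term dominates and yields $e(G[S]) \ge |S|^2/(256\,q)$, which is exactly the density $d' = 1/(128\,q)$ that is required.

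The main obstacle, by a wide margin, is the second step. A parameter check shows that the ratio $|\lambda_2|/d$ must be pushed down to $O(1/q^2)$, substantially stronger than the Ramanujan bound $|\lambda_2| = O(\sqrt{d}) = O(q^{3/2})$ one obtains from generic pseudorandomness. Achieving this forces one to exploit very particular symmetry in the connection set — for instance that it is a union of orbits of the multiplicative action of a large subgroup of $\mathrm{GF}(q^6)^*$ — so that the relevant character sums essentially collapse rather than merely satisfy Weil estimates. This strong spectral estimate is the technical heart of the construction in \cite{mattheus24ramsey}; once it is available, the conclusion is a routine parameter chase through the expander mixing lemma, with the generous hypothesis $q > 2^{40}$ providing the room needed to absorb the absolute constants into the clean parameters $(2^{25}/q^2,\,1/(128q))$.
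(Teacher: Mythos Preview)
First, note that the paper under review does not prove this theorem: it is quoted from \cite{mattheus24ramsey} and used as a black box, so there is no in-paper proof to compare against.

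Your spectral route, however, is blocked by a hard barrier that you partly acknowledge but then wave away. You posit a $d$-regular Cayley graph with $d = \Theta(q^3)$ on $n \approx q^4$ vertices and hope to push the second eigenvalue down to $|\lambda_2| = O(q)$. This is impossible for \emph{any} $d$-regular graph: already the trace identity $\sum_i \lambda_i^2 = nd$ forces $\max_{i \ge 2}|\lambda_i| \ge \sqrt{d(n-d)/(n-1)} = \Omega(q^{3/2})$, and no amount of symmetry in the connection set can beat this --- the Ramanujan threshold is a lower bound, not a target one can undershoot by making character sums ``collapse''. Concretely, at $|S| = \zeta n \approx q^2$ the main term $(d/2n)|S|^2$ in the expander mixing inequality is of order $q^3$, while the unavoidable error term $\tfrac{1}{2}|\lambda_2|\,|S|$ is at least of order $q^{7/2}$, so the inequality yields nothing. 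The construction in \cite{mattheus24ramsey} is neither a Cayley graph nor analysed spectrally: the $N_q = q^2(q^2-q+1)$ vertices are the secant lines of the Hermitian unital in $\mathrm{PG}(2,q^2)$, the $K_4$-freeness is deduced from the absence of O'Nan configurations in that unital, and the local density is obtained by a direct double-counting argument in the point--line incidence structure rather than from eigenvalues.
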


\begin{proof}[Proof of Theorem \ref{thm:no_sparse}]
Let $G$ be as given by Theorem \ref{thm:graph} for sufficiently large $q$. By Lemma \ref{lemma:small_subsets}, there exists $V' \subseteq V(G)$ of size
$$
    |V'| = \Omega\left( D q^3 (\log q)^2 \right)
$$
such that every set $S \subseteq V'$ of size $|S| \ge t$ contains at least $D|S|$ edges, where
$$
    t = \Theta\left( D q (\log q)^2 \right).
$$
As $|V'| = \Omega\left( \frac{ t^3}{D^2 (\log t)^4} \right)$, the graph $G[V']$ has the desired property.

\end{proof}

Finally, we deduce Theorem \ref{thm:main} from Theorem \ref{thm:no_sparse}.

\begin{proof}[Proof of Theorem \ref{thm:main}]
    Consider a graph $F$ such that $\mathrm{ex}(F, t) \le Ct^{1 + \alpha}$, for some constants $C > 0$ and $\alpha \in [0, 1/2)$, for every $t$. Let $G$ be the graph given by Theorem \ref{thm:no_sparse} for $D = Ct^{\alpha}$, for some sufficiently large $t$. The graph $G$ has
    $$
        n = \Theta\left( \frac{t^{3 - 2\alpha}}{(\log t)^4} \right)
    $$
    vertices, and every subset $S \subseteq V(G)$ of size $t$ induces at least $Ct^{1 + \alpha}$ edges, thus it contains a copy of $F$. As $t = \Theta\left(n^{1 / (3 - 2\alpha)} (\log n)^{4/(3 - 2\alpha)}\right)$, this proves the statement.
\end{proof}

\paragraph{Acknowledgment.} The author thanks Zach Hunter for pointing out a subtle mistake in the proof of Lemma \ref{lemma:count}.

\bibliographystyle{abbrv}
\bibliography{references}

\end{document}